\newtheorem{theo}{Theorem}[section]
\newtheorem{cor}[theo]{Corollary}
\newtheorem{lem}[theo]{Lemma}
\newtheorem{defn}[theo]{Definition}
\newcommand{\set}[1]{\left\{#1\right\}}
\newcommand{\Z}{\mathbb{Z}}
\newcommand{\N}{\mathbb{N}}
\newcommand{\NZ}{\mathbb{S}}
\newcommand{\s}{\sigma}
\newcommand{\AS}{\mathcal{A}^\mathbb{S}}
\newcommand{\Alf}{\mathcal{A}}
\newcommand{\x}{\mathbf{x}}
\newcommand{\y}{\mathbf{y}}
\newcommand{\f}{\mathbf{f}}
\newcommand{\cqd}{\hspace{11.5cm}$\square$ }
\newcommand{\AMSclassification}[1]{\par\addvspace\baselineskip\noindent\textbf{Mathematical subject classification:}\enspace\ignorespaces#1}
\newcommand{\keywords}[1]{\par\addvspace\baselineskip\noindent\textbf{Keywords:}\enspace\ignorespaces#1}
\title{Standard decomposition of expansive ergodically supported dynamics}
\author{Marcelo Sobottka\footnote{E-mail address: sobottka@mtm.ufsc.br}}
\date{}
\begin{document}
\maketitle

\footnotesize{\centerline{Departamento de
Matem\'atica,}\centerline{Centro de Ci\^{e}ncias F\'{\i}sicas e
Matem\'{a}ticas,} \centerline{Universidade de Federal de Santa Catarina}
  \centerline{CEP 88040-900,
Florian\'{o}polis - SC, Brazil.}}\normalsize

\begin{abstract}
In this work we introduce the notion of weak quasigroups, that are quasigroup operations defined almost everywhere on some set. Then we prove that the topological entropy and the ergodic period of
an invertible expansive ergodically supported dynamical system $(X,T)$ with the shadowing property establishes a sufficient criterion for
the existence of quasigroup operations defined almost everywhere outside of universally null sets and for which $T$ is an automorphism. Furthermore, we find a decomposition of the dynamics of $T$ in terms of $T$-invariant weak topological subquasigroups.
\end{abstract}

\footnotesize
\keywords{Dynamical systems, Ergodic theory, Information Theory, Structure theory.}

\AMSclassification{68P30, 17C10, 20N05.}

\normalsize

\bigskip
\hrule
\noindent
{\footnotesize\em This is a pre-copy-editing, author-produced preprint of an article accepted for publication in Nonlinear Dynamics, ISSN 0924-090X. The definitive publisher-authenticated version is available online at:\newline
http://dx.doi.org/10.1007/s11071-014-1383-4 or\newline
http://link.springer.com/article/10.1007\%2Fs11071-014-1383-4 .}
\hrule
\bigskip

\section{Introduction}\label{introduction}


The problem of characterizing the dynamical behavior of maps which
are endomorphisms for compact groups has been widely studied in the
last years (see for instance \cite{CeccheriniCoornaert,HostMaassMartinez,mmpy,mms,Melbourne,Miles,Pivato03,Quian_et_al,Sobottka2007,Sobottka2008}).

One of the first works on this subject is due to R.
Bowen \cite{bowen}, who studied the entropy of such maps and showed
that the Haar measure is the maximum entropy measure for a certain class
of algebraic dynamical systems. Later, in \cite{lind77}, D. Lind
proved that ergodic maps which are automorphisms for compact Abelian
groups are always conjugated to some full shift. For the case where
$(X,+)$ is any topological group with $X$ being a zero-dimensional
space, B. Kitchens \cite{kitchens} proved that any expansive endomorphism $T:X\to X$
can be represented as a shift map defined on the Cartesian product
of a full shift with a finite set. In \cite{marcus}, Sindhushayana et Al. proved an analogous result for shift spaces $X$ on some alphabet $A$ for which there exist a group $P(A)$ of permutations of
the elements of $A$ and a group shift $Y\subseteq P(A)^\Z$, such that $X$ is
invariant under the action of any element of $Y$. In \cite{Sobottka2007}, this
result was extended for expansive maps which are endomorphisms for
a certain class of zero-dimensional quasigroups.

In this work, we prove that the topological entropy and the ergodic period of
an invertible expansive ergodically supported dynamical system $(X,T)$ with the shadowing property provides a sufficient criterion for
the existence of quasigroup operations defined almost everywhere outside of universally null sets and for which $T$ is an automorphism (Theorem \ref{entropia->operacao}). As a consequence of this result, we prove that if $(X,T)$ is ergodically aperiodic and has topological entropy $\log(N)$ for an integer $N\notin\{2,8\}\cup\{2p:\ p\text{ is prime}\}$, then we can find a quasigroup operation defined almost everywhere and decompose the dynamics of $T$ in terms of a finite family of subquasigroups (Theorem \ref{standard_decomposition}). In this way we obtain for ergodic maps an analogue to the decomposition of linear maps in terms of their eigenspaces.

\bigskip

We say $(X,T)$ is a {\it topological dynamical system} if $X$ is a
compact metric space and $T:X\to X$ is a continuous onto map. The topological entropy of $(X,T)$ will be denoted by $\mathbf{h}(T)$.
We say $(X,T)$ and
$(Y,S)$ are {\em conjugated} if there
exists an invertible map $\f:X\to Y$ such that $\f\circ T=S\circ\f$. In the case when $\f$ is a homeomorphism we say $(X,T)$ and
$(Y,S)$ are {\em topologically conjugated}.

Given a finite alphabet $\Alf$, define $\AS:=\set{(x_i)_{i\in\NZ}: x_i\in\Alf,\forall
i\in\NZ}$, with $\NZ=\Z$ or $\NZ=\N$. We consider in $\AS$
the product topology which is generated by the clopen
subsets called cylinders. Let $\s_{\AS}:\AS\to\AS$ be the {\em shift map} defined by
$\s_{\AS}\bigl((x_i)_{i\in\NZ}\bigr)=(x_{i+1})_{i\in\NZ}$.
Therefore, a symbolic dynamical system is a topological dynamical
system $(\Lambda,\s_\Lambda)$ where $\Lambda\subseteq\AS$ is a closed subset such that $\s_{\AS}(\Lambda)=\Lambda$,
and $\s_\Lambda$ is the restriction
of $\s_{\AS}$ to $\Lambda$ (in this case we refer to $\Lambda$ as a {\em shift space}). A special type of shift spaces are the {\em Markov shifts}, which are those symbolic dynamical systems that can be constructed from walks on finite directed graphs (see \cite{LindMarcus} for more details).

A topological dynamical system $(X,T)$, is said to be {\em expansive}\footnote{The standard definition of expansiveness states that $(X,T)$ is expansive if there exists $\delta>0$ such that if $x\neq y$, then $d(T^n(\x),T^n(\y))>\delta$ for some $n\in \NZ$. Note that the standard definition implies the definition of expansiveness that we are adopting here.}
if there exists a family $\{U_i\}_{1\leq i\leq k}$ of open sets,
such that $\overline{\cup_{1\leq i\leq k} U_i}=X$ and for $\x,\y\in
X$, $\x\neq \y$, there exists $1\leq i\leq k$ and $n\in\NZ$ (with
$\NZ=\Z$ if $T$ is invertible and $\NZ=\N$ otherwise) so that
$T^n(\x)\in \bar{U}_i$ and $T^n(\y)\notin \bar{U}_i$. When $(X,T)$ is expansive we can define its symbolic
representation as the shift space $(\Lambda,\s_\Lambda)$, where
$\Lambda\subseteq\{1,\ldots,k\}^\NZ$ is such that
$(q_i)_{i\in\NZ}\in\Lambda$ if and only if there exists $\x_0\in X$
such that for all $i\in\NZ$ we have $T^i(\x_0)\in U_{q_i}$.

We say $(X,T)$ has the {\em shadowing property} if for any
$\epsilon>0$ there exists $\delta>0$ such that if
$(\y_n)_{n\in\NZ}\subset X$ is a sequence which verifies
$d(T(\y_n),\y_{n+1})<\delta$ for all $n\in\NZ$, then there exists
$\x_0\in X$ such that $d(T^n(\x_0),\y_n)<\epsilon$ for all $n\in\NZ$.

Given $(X,T)$, define $X^T:=\{(x_i)_{i\in\Z}:\ x_{i+1}=T(x_i), \forall i\in\Z\}$. It is well known that there exists a product metric on $X^T$ which makes it compact, and for which the shift map $\s_T:X^T\to X^T$ is continuous (see \cite[Chap. 5]{walters}). The topological dynamical system $(X^T,\s_T)$ is called the {\em inverse limit system} of $(X,T)$. The projection $p:X^T\to X$ which takes the sequence $(x_i)_{i\in\Z}$ to $x_0$ is continuous and commutes with the maps $\s_T$ and $T$, that is, $p\circ\s_T=T\circ p$. In fact, if $T$ is invertible, then for each $a\in X$ the unique sequence in $X^T$ with $x_0=a$
is $(T^i(a))_{i\in\Z}$, which means that $p$ is invertible and, since $p$ is a continuous function between compact spaces, it implies that $p^{-1}$ is also continuous.
Therefore, in such a case, $p$ is a topological conjugacy between $(X,T)$ and $(X^T,\s_T)$.

A probability measure on $X$ is said to be an {\em ergodically supported measure} if it is an ergodic measure which
assigns positive measure for any nonempty open subset of $X$. Thus, we say that $(X,T)$ is {\em ergodically supported} if there exists an
ergodically supported measure for it.  A set $E\subset X$ which has zero measure for any ergodically supported measure is said to be a {\em universally null set}.

If for all $n\geq 1$ we have that $(X,T^n)$ is ergodically
supported, then we say that $(X,T)$ is {\em ergodically aperiodic}. On
the other hand, we say that $(X,T)$ has {\em ergodic period $B\in\N$} if
$(X,T)$ is ergodically supported and there exists a finite family
$\{C_i\}_{0\leq i\leq B-1}$ of closed sets, such that:
$X=\bigcup_{i=0}^{B-1}C_i$; $C_i\cap C_j$ is a universally null set for any $i\neq j$;
$T(C_i)=C_{i+1\ (mod\ B)}$; and $(C_i,T^B)$ is ergodically aperiodic for all $i$.

An important concept in dynamical systems is the {\em almost topological conjugacy} of two dynamical systems. Such a concept was introduced by R. Adler and B. Marcus in \cite{adler} to study invariants of Markov shifts and later extended by W. Sun in \cite{sun} to dynamical systems whose symbolic representations are Markov shifts. Due to the central role played by almost topological conjugacies in this work, we present its definition and the main result about almost topological conjugate dynamical systems due to Sun:

\begin{defn}[Def. 1.1 in \cite{sun}] Two ergodically supported topological dynamical systems $(X,T)$ and $(Y,S)$ are said to be {\em almost topologically
conjugate} if there exist an ergodically supported
Markov shift $(\Lambda,\s_\Lambda)$ and two continuous onto maps
$\f_T:\Lambda\to X^T$ and $\f_S:\Lambda\to Y^S$ such that:\\

\noindent (i) $\sigma_{X^T}\circ\f_T=\f_T\circ\s_\Lambda$ and
$\sigma_{Y^S}\circ\f_S=\f_S\circ\s_\Lambda$;\\

\noindent (ii) There exist a $\sigma_{X^T}$-invariant universally null set $M_2\subset X^T$ and a $\sigma_{Y^S}$-invariant universally null set $P_2\subset Y^S$, such that $\f_T:\Lambda\setminus M_1\to X^T\setminus M_2$ and $\f_S:\Lambda\setminus P_1\to Y^S\setminus P_2$ are one-to-one, where $M_1=\f_T^{-1}(M_2)$ and $P_1=\f_S^{-1}(P_2)$.
\end{defn}

\begin{theo}[Theo. 1.2 in \cite{sun}]\label{SunMainTheo}
Let $(X,T)$ and $(Y,S)$ be ergodically supported expansive maps with the
shadowing property. Then $(X,T)$ and $(Y,S)$ are almost topologically
conjugate if and only if they have equal topological entropy
and the same ergodic period.
\end{theo}

\section{Quasigroups and weak quasigroups}\label{quasigroups}

Let $G$ be a set and let $*$ be a binary operation on $G$. We say that
$*$ is a {\it quasigroup operation} if $*$ is left and right cancelable, that is, if

$$a*b=a*c \qquad\Longleftrightarrow\qquad b=c$$
and
$$b*a=c*a \qquad\Longleftrightarrow\qquad b=c,$$
respectively.

If, in addition, $G$
is a topological space and $*$ is
continuous, we say that $*$ is a
{\em topological quasigroup operation}. For the case when $G$ is finite quasigroup, the multiplication table for $*$ is a Latin square (that is, it has no repetition of elements on each row and on each column). Furthermore, it is easy to check that if for any
$g\in G$ it follows that $g*G=G*g=G$ (which always occurs if $G$ is finite), then $*$ is an associative quasigroup operation if and
only if $*$ is a group operation.

For a finite set $G$, we say that $s:G\to G$ is a cyclic permutation on $G$, if given any $x\in G$ we have that $G=\{s^k(x): k=0,\ldots, \# G-1\}$, where $\# G$ denotes the cardinality of $G$. The following lemma gives a sufficient and necessary condition on the cardinality of a finite set $G$ for the existence of quasigroup operations on $G$ for which a given cyclic permutation is an automorphism.

\begin{lem}\label{finiteQuasiGroup}
Given a finite set $G$ and a cyclic permutation $s:G\to G$, there exists a quasigroup operation $*$ on $G$ for which $s$ is an automorphism if and only if the cardinality of $G$ is odd.
\end{lem}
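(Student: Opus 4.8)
The plan is to reduce the statement to the classical fact that the cyclic group $\Z_n$ carries a \emph{complete mapping} --- a permutation $f$ such that $z\mapsto f(z)-z$ is also a permutation --- precisely when $n$ is odd. First I would unwind the hypotheses. Since $G=\set{s^k(x):k=0,\ldots,\#G-1}$ holds for every $x\in G$, the permutation $s$ is a single cycle of length $n:=\#G$; hence I may identify $G$ with $\Z_n$ in such a way that $s$ becomes the map $z\mapsto z+1$. Moreover, on a finite set a binary operation is left and right cancelable exactly when its Cayley table is a Latin square, i.e. every row and every column of the table is a permutation of $G$; and since a permutation of a finite set is automatically bijective, ``$s$ is an automorphism of $(G,*)$'' simply means $s(x*y)=s(x)*s(y)$ for all $x,y$.

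Next I would put such an operation into a normal form. The automorphism condition reads $(x+1)*(y+1)=(x*y)+1$; iterating it, and using that the inverse of an automorphism is an automorphism, gives $(x+k)*(y+k)=(x*y)+k$ for all $k\in\Z$. Writing $f(z):=0*z$ and taking $k=-x$ yields $x*y=f(y-x)+x$. Conversely, for \emph{any} map $f:\Z_n\to\Z_n$ this formula defines an operation for which $s$ is an automorphism, so it only remains to decide when it is a quasigroup. Fixing $x$, the row $y\mapsto f(y-x)+x$ is a permutation iff $f$ is a permutation; fixing $y$ and substituting $w=y-x$, the column $x\mapsto f(w)+y-w=y+(f(w)-w)$ is a permutation iff $w\mapsto f(w)-w$ is a permutation. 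Thus a quasigroup on $G$ with $s$ as an automorphism exists if and only if $\Z_n$ admits a complete mapping.

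It then remains to prove the two halves of the complete-mapping dichotomy. If $n$ is odd I would exhibit one explicitly: $f(z)=2z$ is a permutation because $\gcd(2,n)=1$, and $f(z)-z=z$ is the identity; equivalently, the operation $x*y:=2y-x$ is a Latin square on which $s$ acts by an automorphism, as a one-line verification shows. If $n$ is even I would invoke the standard counting obstruction: were $f$ and $z\mapsto f(z)-z$ both permutations, then summing over $z\in\Z_n$ would give $\sum_z(f(z)-z)=\sum_z f(z)-\sum_z z=0$ in $\Z_n$, whereas the same sum must also equal $\sum_{w\in\Z_n}w=\tfrac{n(n-1)}{2}\equiv\tfrac n2\pmod n$, which is nonzero because $n$ is even, a contradiction. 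The only step that needs genuine care is the passage to the normal form $x*y=f(y-x)+x$ together with the correct bookkeeping of the row and column permutation conditions; once that is in place, both directions are immediate.
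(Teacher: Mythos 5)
Your proof is correct and takes essentially the same route as the paper's: your normal form $x*y=f(y-x)+x$ is exactly the paper's parametrization of the Cayley table by its first row ($f(j)=a_j$), your even-$n$ counting obstruction is the paper's row-sum/column-sum computation mod $n$, and your odd-$n$ construction $x*y=2y-x$ is a variant of the paper's $x*y=\lambda(x\tilde+y)$ with $\lambda=(n+1)/2$. The only (cosmetic) difference is that you package the argument as the classical equivalence with complete mappings of $\mathbb{Z}_n$, which makes the bookkeeping a bit cleaner but is not a different proof.
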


\begin{proof}
Let $n:=\# G$ and, without loss of generality, we can consider $G:=\{0,\ldots,n-1\}$ and the cyclic permutation on $G$ in the form $$s(x)=x\tilde +1,$$ where $\tilde +$ is the sum $mod\ n$.

If $n$ is odd, we can define $\lambda:=(n+1)/2\in\N$ and, since $gcd(\lambda,n)=1$ we have a quasigroup operation $*$ defined for all $x,y\in G$ by $$x*y:=\lambda (x\tilde + y),$$ where $\lambda z$ stands for $z$ summed $\lambda$ times with itself ($mod\ n$). Therefore, for any $x,y\in G$ we have that \begin{equation*}\begin{array}{lcl}s(x)*s(y)&=&(x\tilde+ 1)*(y\tilde+ 1)=\lambda[(x\tilde+ 1)\tilde+(y\tilde+ 1)]=\\\\&=&\lambda(x\tilde+ y\tilde+ 2)=\lambda (x\tilde+ y)\tilde+ \lambda 2=(x*y)\tilde+ 1=s(x*y).\end{array}\end{equation*}

Now, let us show that if $n$ is even, then it does not exist a quasigroup operation for which $s$ is automorphism. For this, consider that $*$ is some binary operation on $G$ for which $s$ is an automorphism. We can represent the action of $*$ on $G$ by a table where the entry in the row indexed by $x$ and column indexed by $y$ represents the product $x*y$. Note that since $s$ is an automorphism for $*$, then the table looks as follows:

\begin{center}
\begin{tabular}{|c|ccccc|}
  \hline
  $*$       & 0 & 1 & 2 & \ldots &  n-1 \\ \hline
  0      & $a_0$ & $a_1$ & $a_2$ & $\cdots$ &  $a_{n-1}$ \\
  1      & $a_{n-1}\tilde+ 1$ & $a_0\tilde+ 1$ & $a_1\tilde+ 1$ & $\ddots$ &  $a_{n-2}\tilde+ 1$ \\
  2      & $a_{n-2}\tilde+ 2$ & $a_{n-1}\tilde+ 2$ & $a_0\tilde+ 2$ & $\ddots$ & $a_{n-3}\tilde+ 2$  \\
  \vdots & $\vdots$ & $\ddots$ & $\ddots$ & $\ddots$ &  $\vdots$ \\
  n-1    & $a_1\tilde+ (n-1)$ & $a_2\tilde+ (n-1)$ & $a_3\tilde+ (n-1)$ & $\cdots$ & $a_0\tilde+ (n-1)$ \\
  \hline
\end{tabular}
\end{center}

\noindent where $a_0,a_1,\ldots,a_{n-1}\in\{0,\ldots,n-1\}$. Hence, for all $x,y\in G$, we can write $x*y=a_{y\tilde+(n-x)}\tilde+ x$. We recall that $*$ is a quasigroup operation if and only if the above table is a Latin square, that is, if and only if there is not repetition of elements in any row and any column of the table. Therefore, the sum $mod\ n$ of all elements of any row or of all elements of any column should result in the same value. But, if $n$ is even, supposing by contradiction that there is no repetitions in any row of the table (that is, $a_i\neq a_j$, for $i\neq j$), we get that the sum $mod\ n$ over any row is
$$\sum_{y=0}^{n-1}x*y=\sum_{y=0}^{n-1}(a_{y\tilde+(n-x)}\tilde+ x)=\frac{1}{2}(n-1)n\tilde+nx\ (mod\ n)=\frac{n}{2},$$
while the sum (mod $n$) over any column is
$$\sum_{x=0}^{n-1}x*y=\sum_{x=0}^{n-1}(a_{y\tilde+(n-x)}\tilde+ x)=\frac{1}{2}(n-1)n\tilde+\frac{1}{2}(n-1)n\ (mod\ n)=0,$$
which contradicts the assumption that the table is a Latin square.
\end{proof}

In the next section we shall look for quasigroup operations, for which a given dynamical system is an automorphism. In general, if $(X,T)$ is ergodic with nonzero entropy, then there does not exist such a quasigroup operation (with exception for dynamical systems on zero-dimensional spaces). In fact, to deal with the general case, we will need some `weakness' in the operation.

\begin{defn}\label{ergodic_quasigroup} Let $G$ be a topological space.
Given a probability measure $\mu$ on the Borelians of $G$, we will say that $*$ is a {\em weak quasigroup operation with respect to $\mu$} if $*$ is a quasigroup operation which is well defined for $\mu\times\mu$-almost all $(a,b)\in G\times G$. If in addition the operation $*$ is continuous on its
domain, then we will say it is a {\em topological weak quasigroup operation with respect to $\mu$}. When $*$ is a weak quasigroup operation with respect to $\mu$ on $G$, we will call $(G,*,\mu)$ a {\em (topological) weak quasigroup}. Furthermore, if a (topological) weak quasigroup is associative, then we will simply say it is a {\em (topological) weak group}
\end{defn}

Note that the definition of a weak quasigroup operation is made on the product space $G\times G$ and not on the space $G$. Thus, it is possible that there exist $x\in G$ and a non-null measure subset of $A\subset G$ such that for all $y\in A$ the products $x*y$ and $y*x$ are not defined.

On the other hand, if $*$ is a weak quasigroup operation with respect to some measure $\mu$ on $G$, then
given $x,y\in G$, the existence of the product $x*y$ does not imply the
existence of $y*x$ (except when $*$ is commutative). Furthermore, the cancelation property of a weak quasigroup operation $*$ means that if $x*y$ (or $y*x$) and $x*z$ (or $z*x$) are defined, then $x*y=x*z$ (or $y*x=z*x$) if and only if $y=z$. In the same way, the associativity of a weak group holds only if both $x*(y*z)$ and $(x*y)*z$ are defined.

\section{Weak quasigroups and expansive ergodically supported automorphisms}\label{main}

In order to construct a topological weak quasigroup for which a given topological dynamical system is an automorphism, we need the following results.


\begin{lem}\label{lemma1}
Let $\alpha:\mathcal{X}\to \mathcal{Y}$ be a continuous and onto map between topological spaces, and suppose $\mathcal{X}$ is compact and $\mathcal{Y}$ is Hausdorff. Given $\tilde{\mathcal{Y}}\subseteq \mathcal{Y}$, define $\tilde{\mathcal{X}}:=\alpha^{-1}(\tilde{\mathcal{Y}})$. If the restriction $\tilde\alpha:\tilde{\mathcal{X}}\to \tilde{\mathcal{Y}}$ is one-to-one, then $\tilde\alpha$ is a homeomorphism.
\end{lem}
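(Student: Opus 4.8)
The plan is to reduce everything to the standard fact that a continuous map from a compact space to a Hausdorff space is closed. First I would record that $\tilde\alpha$ is a continuous bijection: it is continuous as a restriction of the continuous map $\alpha$; it is injective by hypothesis; and it is surjective onto $\tilde{\mathcal{Y}}$ because $\alpha(\tilde{\mathcal{X}})=\alpha\bigl(\alpha^{-1}(\tilde{\mathcal{Y}})\bigr)=\tilde{\mathcal{Y}}$, using that $\alpha$ is onto. So it remains only to prove that $\tilde\alpha^{-1}$ is continuous, which is equivalent to showing that $\tilde\alpha$ is a closed map.

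Next I would invoke that $\alpha:\mathcal{X}\to\mathcal{Y}$ itself is a closed map: if $F\subseteq\mathcal{X}$ is closed, then $F$ is compact (closed in the compact $\mathcal{X}$), so $\alpha(F)$ is compact, hence closed in the Hausdorff space $\mathcal{Y}$. Now take any closed subset $C\subseteq\tilde{\mathcal{X}}$ in the subspace topology and write $C=F\cap\tilde{\mathcal{X}}$ with $F$ closed in $\mathcal{X}$. The key identity is
$$\tilde\alpha(C)=\alpha(F)\cap\tilde{\mathcal{Y}}.$$
The inclusion $\subseteq$ is immediate since $\tilde\alpha(C)\subseteq\alpha(F)$ and $\tilde\alpha(C)\subseteq\tilde{\mathcal{Y}}$. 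For $\supseteq$, given $y\in\alpha(F)\cap\tilde{\mathcal{Y}}$ choose $x\in F$ with $\alpha(x)=y$; since $y\in\tilde{\mathcal{Y}}$ we get $x\in\alpha^{-1}(\tilde{\mathcal{Y}})=\tilde{\mathcal{X}}$, hence $x\in F\cap\tilde{\mathcal{X}}=C$ and $y=\tilde\alpha(x)\in\tilde\alpha(C)$. As $\alpha(F)$ is closed in $\mathcal{Y}$, the set $\alpha(F)\cap\tilde{\mathcal{Y}}$ is closed in $\tilde{\mathcal{Y}}$, and therefore $\tilde\alpha(C)$ is closed in $\tilde{\mathcal{Y}}$.

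This shows $\tilde\alpha$ is a continuous closed bijection, so $\tilde\alpha^{-1}$ is continuous and $\tilde\alpha$ is a homeomorphism. The main obstacle — mild, but worth isolating — is that $\tilde{\mathcal{X}}$ need not be compact and $\tilde{\mathcal{Y}}$ need not be closed in $\mathcal{Y}$, so one cannot apply the compact–to–Hausdorff theorem directly to the restriction. The device is to run the closedness argument upstairs in $\mathcal{X}$ and $\mathcal{Y}$ and then intersect back down with $\tilde{\mathcal{X}}$, $\tilde{\mathcal{Y}}$; this is exactly what makes the identity $\tilde\alpha(C)=\alpha(F)\cap\tilde{\mathcal{Y}}$ hold, and it relies crucially on $\tilde{\mathcal{X}}$ being the \emph{full} preimage $\alpha^{-1}(\tilde{\mathcal{Y}})$ rather than an arbitrary subset mapping into $\tilde{\mathcal{Y}}$.
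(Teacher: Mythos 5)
Your proof is correct. The paper actually states this lemma without any proof at all (it is followed only by an end-of-proof square), so there is nothing to compare against; your argument — reduce to showing $\tilde\alpha$ is a closed map by running the compact-to-Hausdorff closedness argument upstairs for $\alpha$ and then intersecting down via the identity $\tilde\alpha(F\cap\tilde{\mathcal{X}})=\alpha(F)\cap\tilde{\mathcal{Y}}$ — is the natural one and supplies exactly the missing details. Your closing remark correctly isolates the one point where the statement could fail if misread: the identity, and hence the lemma, depends on $\tilde{\mathcal{X}}$ being the \emph{full} preimage $\alpha^{-1}(\tilde{\mathcal{Y}})$, not merely some subset mapped bijectively onto $\tilde{\mathcal{Y}}$.
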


 \begin{proof}
 We only need to check that $\tilde\alpha^{-1}$ is continuous.
 To achieve this, suppose  by contradiction, that $\tilde\alpha^{-1}$ is not continuous
 and let $(y_i)_{i\in\N}\in\tilde{\mathcal{Y}}$ be a sequence which converges to $y\in\tilde{\mathcal{Y}}$ but $\big(\tilde\alpha^{-1}(y_i)\big)_{i\in\N}$ does not converge to $\tilde\alpha^{-1}(y)$. It means that there should exist an open neighborhood of $\tilde\alpha^{-1}(y)$, $A\subset\tilde{\mathcal{X}}$, and a subsequence $(y_{i_j})_{j\in\N}$, such that \begin{equation}\label{contradic}\tilde\alpha^{-1}(y_{i_j})\notin A,\quad \forall j\in\N.\end{equation}

 Since $\mathcal{X}$ is compact, the sequence $(x_{i_j})_{j\in\N}\in\tilde{\mathcal{X}}$, where $x_{i_j}:=\tilde\alpha^{-1}(y_{i_j})$, has a subsequence which converges in $\mathcal{X}$. Let $(x_{i_{j_k}})_{k\in\N}$ be this subsequence and let $x\in\mathcal{X}$ be its limit. The continuity of $\alpha$ on $\mathcal{X}$ implies $y_{i_{j_k}}=\alpha\bigl(x_{i_{j_k}}\bigr)\to \alpha(x)$ as $k\to\infty$. But $y_{i_{j_k}}\to y$ as $k\to\infty$ and since $\mathcal{Y}$ is Hausdorff we get $\alpha(x)= y$. Thus, since $y\in\tilde{\mathcal{Y}}$, $\tilde{\mathcal{X}}=\alpha^{-1}(\tilde{\mathcal{Y}})$ and $\tilde\alpha$ is injective we get that $x$ is the unique element in the preimage of $y$ by $\alpha$, and therefore $x=\tilde\alpha^{-1}(y)$.
 Hence, $$\lim_{k\to\infty}\tilde\alpha^{-1}(y_{i_{j_k}})=\tilde\alpha^{-1}(y),$$ which is a contradiction with \eqref{contradic}. Thus we conclude that $\tilde\alpha^{-1}$ is continuous on $\tilde{\mathcal{Y}}$.

 \end{proof}

Hence, by using the above lemma we can prove that two invertible dynamical systems that are almost topologically conjugated are topologically conjugated outside of universally null sets:

\begin{theo}\label{almostconjugacy} If two invertible dynamical systems $(X,T)$ and $(Y,S)$ are almost topologically conjugated, then there exists a homeomorphism $\varphi:\tilde X\to \tilde Y$ between $\tilde X\subseteq X$ and $\tilde Y\subseteq Y$ total-measure subsets with respect to any ergodically supported measure, which is a topological conjugacy between $(\tilde X,T)$ and $(\tilde Y,S)$
\end{theo}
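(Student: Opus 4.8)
The plan is to reduce everything to the inverse‑limit systems, use Lemma \ref{lemma1} to promote the one‑to‑one restrictions of $\f_T$ and $\f_S$ to genuine homeomorphisms, glue these two homeomorphisms over their common domain, and finally verify that the piece discarded in the gluing is universally null.

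Since $T$ and $S$ are invertible, the projections $p_X\colon X^T\to X$ and $p_Y\colon Y^S\to Y$ are topological conjugacies; being homeomorphisms intertwining the dynamics, they carry ergodically supported measures to ergodically supported measures, hence universally null sets to universally null sets. So it suffices to build a shift‑commuting homeomorphism $\varphi$ between universally co‑null subsets of $X^T$ and $Y^S$ and then set $\bar\varphi:=p_Y\circ\varphi\circ p_X^{-1}$. First I would apply Lemma \ref{lemma1} with $\alpha=\f_T$ ($\Lambda$ compact, $X^T$ Hausdorff, $\f_T$ continuous onto) and $\tilde{\mathcal Y}=X^T\setminus M_2$: since $\f_T^{-1}(X^T\setminus M_2)=\Lambda\setminus M_1$ and the restriction there is one‑to‑one by condition (ii), the lemma gives that $\f_T\colon\Lambda\setminus M_1\to X^T\setminus M_2$ is a homeomorphism, and likewise $\f_S\colon\Lambda\setminus P_1\to Y^S\setminus P_2$. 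Put $\Lambda^{\ast}:=\Lambda\setminus(M_1\cup P_1)$, $\tilde X^T:=\f_T(\Lambda^{\ast})$, $\tilde Y^S:=\f_S(\Lambda^{\ast})$ and $\varphi:=\f_S\circ(\f_T|_{\Lambda^{\ast}})^{-1}\colon\tilde X^T\to\tilde Y^S$; as a composition of restrictions of homeomorphisms, $\varphi$ is a homeomorphism. Condition (i) together with the $\sigma$‑invariance of $M_2$ and $P_2$ makes $M_1$, $P_1$, hence $\Lambda^{\ast}$, $\tilde X^T$ and $\tilde Y^S$, invariant under the respective shifts, and a one‑line computation using $\sigma_{X^T}\circ\f_T=\f_T\circ\s_\Lambda$ and $\sigma_{Y^S}\circ\f_S=\f_S\circ\s_\Lambda$ then shows $\varphi$ commutes with the shifts.

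It remains to check that $X^T\setminus\tilde X^T$ and $Y^S\setminus\tilde Y^S$ are universally null; by symmetry consider the first. Since $\f_T\colon\Lambda\setminus M_1\to X^T\setminus M_2$ is a bijection, $X^T\setminus\tilde X^T=M_2\cup\f_T(P_1\setminus M_1)$, and $M_2$ is universally null by hypothesis, so the whole point is that $\f_T$ does not inflate the null set $P_1\setminus M_1$. Two preliminary remarks. (a) $M_1$ and $P_1$ are themselves universally null in $\Lambda$: for any ergodically supported measure $\lambda$ on $\Lambda$ the pushforward $(\f_T)_{\ast}\lambda$ is ergodically supported on $X^T$ (ergodicity is preserved by the factor map, and positivity on nonempty open sets holds because $\f_T$ is continuous and onto, so preimages of nonempty opens are nonempty opens), whence $\lambda(M_1)=(\f_T)_{\ast}\lambda(M_2)=0$, and similarly $\lambda(P_1)=0$ via $\f_S$. (b) Consequently $M_1$ has empty interior, so $\Lambda\setminus M_1$ is dense in $\Lambda$. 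Now fix an ergodically supported measure $\mu$ on $X^T$; since $\mu(M_2)=0$ we may pull $\mu$ back through the homeomorphism $h:=\f_T|_{\Lambda\setminus M_1}$ to a Borel probability $\nu$ on $\Lambda$ concentrated on $\Lambda\setminus M_1$. Then $\nu$ is $\s_\Lambda$‑invariant and ergodic, because $h$ is a measure isomorphism intertwining the shifts and removing an invariant null set preserves ergodicity; and $\nu$ is positive on every nonempty open $V\subseteq\Lambda$, since by (b) $V\cap(\Lambda\setminus M_1)$ is nonempty and open in $\Lambda\setminus M_1$, so $h\bigl(V\cap(\Lambda\setminus M_1)\bigr)=W\cap(X^T\setminus M_2)$ for some nonempty open $W\subseteq X^T$, giving $\nu(V)=\mu(W)>0$. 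Thus $\nu$ is ergodically supported, so by (a) $\nu(P_1)=0$, which says exactly $\mu\bigl(\f_T(P_1\setminus M_1)\bigr)=0$; hence $\mu(X^T\setminus\tilde X^T)=0$. As $\mu$ was arbitrary, $X^T\setminus\tilde X^T$ is universally null, and the same argument gives it for $Y^S\setminus\tilde Y^S$; transporting back by $p_X$ and $p_Y$ finishes the proof.

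The main obstacle is precisely this last step: images of universally null sets need not be universally null, so one cannot merely say ``$P_1$ is null, hence so is $\f_T(P_1)$''. The device is to transport an \emph{arbitrary} ergodically supported measure on $X^T$ back to $\Lambda$ along the homeomorphism produced by Lemma \ref{lemma1}, and to use the density of $\Lambda\setminus M_1$ to confirm that the pulled‑back measure is again ergodically supported. Everything else — invoking Lemma \ref{lemma1}, gluing over $\Lambda^{\ast}$, checking shift‑commutation, and conjugating through $p_X$, $p_Y$ — is routine.
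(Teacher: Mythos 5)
Your proof follows the same route as the paper's: pass to the inverse limits via the (homeomorphic, since $T$ and $S$ are invertible) projections, apply Lemma \ref{lemma1} to promote the restrictions of $\f_T$ and $\f_S$ to homeomorphisms, and glue them over $\Lambda\setminus(M_1\cup P_1)$. The one place you go beyond the paper is your final paragraph: the paper simply asserts that the images $\gamma_T(\tilde\Sigma)$ and $\gamma_S(\tilde\Sigma)$ have full measure for every ergodically supported measure, whereas you correctly flag this as the nontrivial point (images of universally null sets need not be universally null) and close it by pulling an arbitrary ergodically supported measure back along the homeomorphism and checking, via density of $\Lambda\setminus M_1$, that the transported measure is again ergodically supported.
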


\begin{proof}

Let $(X^T,\s_{X^T})$ and $(Y^S,\s_{Y^S})$ be the inverse limit systems of $(X,T)$ and $(Y,S)$, respectively. Note that since both $(X,T)$ and $(Y,S)$ are invertible, then the projections $p_T:X^T\to X$ and $p_S:Y^S\to Y$ are homeomorphisms.

Let $(\Sigma,\s)$, $\f_T:\Sigma\to X^T$ and $\f_S:\Sigma\to Y^S$, be the Markov shift and the maps given in the definition of almost topological conjugacy. Also denote as $M_2\subseteq X^T$ and $P_2\subseteq Y^S$, and as $M_1:=\f_T^{-1}(M_2)$ and $P_1:=\f_S^{-1}(P_2)$, the universally null sets which make
the maps $\f_T:\Sigma\setminus M_1\to X^T\setminus M_2$ and $\f_S:\Sigma\setminus P_1\to Y^S\setminus P_2$ to be bijections. Denote as $\bar\f_T$ and  $\bar\f_S$ these restrictions of $\f_T$ to $\Sigma\setminus M_1$ and of $\f_S$ to $\Sigma\setminus P_1$, respectively.  From Lemma \ref{lemma1}, we get that $\bar\f_T$ and $\bar\f_S$ are homeomorphisms.

Note that since $p_T$ and $p_S$ are homeomorphisms, the sets $M_3:=p_T(M_2)$ and $P_3:=p_S(P_2)$ are also universally null sets. Denote as $\bar p_T$ and as $\bar p_S$ the restrictions $p_T:X^T\setminus M_2\to X\setminus M_3$ and $p_S:Y^S\setminus P_2\to Y\setminus P_3$.

Thus, the maps $\gamma_T:\Sigma\setminus M_1\to X\setminus M_3$ and $\gamma_S:\Sigma\setminus P_1\to Y\setminus P_3$ defined by $\gamma_T:=\bar p_T\circ\bar\f_T$ and $\gamma_S:=\bar p_S\circ\bar\f_S$ are also homeomorphisms.

Note that $\tilde \Sigma:=\Sigma\setminus(M_1\cup P_1)$, is a total-measure subset of $\Sigma\setminus M_1$ and of $\Sigma\setminus P_1$, with respect to any ergodically supported measure. Hence, $\tilde X:=\gamma_T(\tilde \Sigma)\subseteq X$ and $\tilde Y:=\gamma_S(\tilde \Sigma)\subseteq Y$ are  total-measure subsets with respect to any ergodically supported measure. Therefore we can consider $\tilde \gamma_T:\tilde\Sigma\to\tilde X$ and
$\tilde \gamma_S:\tilde\Sigma\to\tilde Y$ the restrictions of $\gamma_T$ and $\gamma_S$, respectively.

Finally, we define the homeomorphism $\varphi:\tilde X\to\tilde Y$ given by $$\varphi:= \tilde\gamma_S\circ\tilde\gamma_T^{-1}.$$
Since, all maps involved in the definition of $\varphi$ commute with the dynamical systems, we get that $\varphi$ is a topological conjugacy between $(\tilde X,T)$ and $(\tilde Y,S)$.

\end{proof}

Note that, since the sets $\tilde X$ and $\tilde Y$ are total-measure subsets for ergodically supported measures, then they
are dense in the interior of $X$ and $Y$, respectively. Thus, in general it is not possible to extend the map $\varphi$ to the the closure of $\tilde X$ and $\tilde Y$ (it only is possible in the particular case when $(\overline{int(X)},T)$ and $(\overline{int(Y)},S)$ are topologically conjugate).

\begin{cor}\label{entropia->conjugacao} Let $(X,T)$ and $(Y,S)$ be two topological
dynamical systems, and assume they are invertible, expansive, ergodically supported, have the
shadowing property, and have equal topological entropy and ergodic period. Then, there exist $\tilde X\subset X$ and $\tilde Y\subset Y$ total-measure subsets with respect to any ergodically supported measure such that $(\tilde X,T)$ and $(\tilde Y,S)$ are topologically conjugated.
\end{cor}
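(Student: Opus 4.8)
The plan is to reduce the statement to Theorem \ref{almostconjugacy}: once $(X,T)$ and $(Y,S)$ are known to be almost topologically conjugate in Sun's sense, that theorem immediately supplies the homeomorphism between total-measure subsets conjugating $T$ and $S$. (Strictly, the conclusion also requires $\mathbf{h}(T)=\mathbf{h}(S)$ --- as the corollary's label suggests, and as is in any case necessary, since by the variational principle the measure-theoretic entropy of an ergodically supported measure is an invariant of the conclusion; I assume this equality below.) Thus the whole content is to establish the almost topological conjugacy.

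\emph{Step 1: symbolic coding.} Since $T$ is invertible, $p_T$ is a homeomorphism and $(X,T)\cong(X^T,\sigma_{X^T})$. Expansiveness gives a symbolic representation $\Lambda_X\subseteq\{1,\dots,k\}^{\Z}$ with a continuous, onto, shift-commuting coding map $\pi_X\colon\Lambda_X\to X^T$, and the shadowing property turns $\Lambda_X$ into a Markov shift and makes $\pi_X$ finite-to-one. I would lift an ergodically supported measure of $X$ to an ergodic measure $\nu$ on $\Lambda_X$; its support lies in a single irreducible component $\Sigma_X$ of $\Lambda_X$, and since $\pi_X$ carries the support of $\nu$ onto $X^T$, the restriction $\pi_X|_{\Sigma_X}\colon\Sigma_X\to X^T$ is still onto and finite-to-one. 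Being an irreducible Markov shift, $\Sigma_X$ is ergodically supported (its Parry measure is ergodic of full support); finite-to-one-ness gives $\mathbf{h}(\Sigma_X)=\mathbf{h}(T)$; and the ergodic-period decomposition $X=C_0\cup\cdots\cup C_{B-1}$ with each $(C_i,T^B)$ ergodically aperiodic forces the combinatorial period of $\Sigma_X$ to be $B$, since for irreducible Markov shifts combinatorial aperiodicity coincides with ergodic aperiodicity. Finally, the locus in $\Sigma_X$ on which $\pi_X$ branches lies over a $\sigma$-invariant subset of $X^T$ which, because $(X,T)$ is ergodically supported, is universally null; hence $(X,T)$ is almost topologically conjugate to $(\Sigma_X,\sigma)$, witnessed by $\Sigma_X$ itself together with $\pi_X|_{\Sigma_X}$ and the identity. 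The same construction applied to $(Y,S)$ yields an irreducible Markov shift $\Sigma_Y$, almost topologically conjugate to $(Y,S)$, with period $B$ and entropy $\mathbf{h}(S)$.

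\emph{Step 2: matching, then transitivity.} Since $\mathbf{h}(\Sigma_X)=\mathbf{h}(T)=\mathbf{h}(S)=\mathbf{h}(\Sigma_Y)$ and both periods equal $B$, the Adler--Marcus classification theorem (see \cite{adler,LindMarcus}) provides a common irreducible Markov shift with finite-to-one almost-invertible factor maps onto $\Sigma_X$ and onto $\Sigma_Y$; these satisfy Sun's injectivity requirement because the set of doubly transitive points of an irreducible Markov shift has full measure for every ergodic measure of full support, so its complement is universally null. Hence $\Sigma_X$ and $\Sigma_Y$ are almost topologically conjugate. Next I would check that almost topological conjugacy in Sun's sense is transitive: given two instances through intermediate Markov shifts, their fibre product over the common system is, after recoding, an ergodically supported Markov shift, and the exceptional universally null sets pull back to universally null sets. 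Chaining $(X,T)$, $\Sigma_X$, $\Sigma_Y$, $(Y,S)$ then shows that $(X,T)$ and $(Y,S)$ are almost topologically conjugate, and Theorem \ref{almostconjugacy} completes the proof.

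The step I expect to be the main obstacle is the passage, inside Step 1, from ``the branching locus of the symbolic representation is meager'' --- which a bare coding argument provides --- to ``it is universally null'': this is precisely where the ergodic-supportedness hypothesis is indispensable, excluding ergodically supported measures concentrated on the set where distinct itineraries code one and the same point. Of secondary difficulty are the identification of the combinatorial period of $\Sigma_X$ with the ergodic period $B$, and the verification that Sun's almost topological conjugacy is genuinely transitive via the fibre-product construction.
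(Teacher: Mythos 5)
Your proof takes essentially the same route as the paper: the paper's entire proof consists of invoking Theorem 1.2 of Sun \cite{sun} (which asserts exactly the almost topological conjugacy that your Steps 1--2 reconstruct via symbolic coding and the Adler--Marcus theorem) and then applying Theorem \ref{almostconjugacy}. Your remark that the hypothesis $\mathbf{h}(T)=\mathbf{h}(S)$ must be added is correct --- that hypothesis appears in Sun's theorem and is explicitly supplied when the corollary is used in the proof of Theorem \ref{entropia->operacao}, so its absence from the statement is an omission in the paper rather than a defect of your argument.
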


\begin{proof}
It is a consequence of Theorem \ref{SunMainTheo} and the previous theorem.
\end{proof}

Now, we are able to get sufficient conditions on a dynamical system that allow to define a topological weak quasigroup operation for which the map of the dynamical system becomes an automorphism.

\begin{theo}\label{entropia->operacao} Let $(X,T)$ be a topological
dynamical system, and assume it is invertible, expansive, ergodically supported with odd ergodic period, and has the
shadowing property. If $\mathbf{h}(T)=\log (N)$ for some positive integer
$N$, then there exists a topological weak quasigroup operation $\bullet$ with respect to any ergodically supported measure of $(X,T)$, for which $T$ is an automorphism.
\end{theo}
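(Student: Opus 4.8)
The strategy is to transport a finite-set quasigroup structure through the almost-topological-conjugacy machinery already developed. First I would invoke the classical structure theory for expansive dynamical systems with the shadowing property: such a system is a quotient of a shift of finite type, and combined with the hypothesis $\mathbf{h}(T)=\log N$ one shows that $(X,T)$ is almost topologically conjugate (in the sense of Sun) to the full shift $(\mathcal{A}^\Z,\s)$ on an alphabet $\mathcal{A}$ with $\#\mathcal{A}=N$, or more precisely — because of the odd ergodic period hypothesis $B$ — to a system built from $B$ disjoint copies of such a full shift cyclically permuted. The key external inputs here are Sun's theorem (cited as Theorem 1.2 in \cite{sun}) giving the almost topological conjugacy, and Corollary \ref{entropia->conjugacao}, which upgrades this to a genuine topological conjugacy $\varphi:\tilde X\to\tilde Y$ between total-measure subsets, where $\tilde Y$ sits inside the model system $Y$.

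Next I would build the algebraic structure on the model system. On the alphabet $\mathcal{A}=\{0,\ldots,N-1\}$ there is nothing forcing a quasigroup with $\s$ an automorphism directly, but on $\mathcal{A}^\Z$ coordinatewise addition $\bmod\ N$ makes $(\mathcal{A}^\Z,+)$ a compact group for which the shift is a (continuous) automorphism — this handles the aperiodic case $B=1$. For general odd $B$, the model $Y$ is a union of $B$ cyclically permuted copies of $\mathcal{A}^\Z$; here I would combine the group structure on each copy with Lemma \ref{finiteQuasiGroup}: since $B$ is odd, the translation cyclically permuting the $B$ copies (the "index" of a copy, acted on by $T$) is an automorphism of some quasigroup operation $\star$ on $\{0,\ldots,B-1\}$. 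Taking the product of $\star$ with coordinatewise addition on the $\mathcal{A}^\Z$-factor yields a topological quasigroup operation on all of $Y$ for which $S$ is an automorphism. (One must check left/right cancellation survives the product, which is routine.)

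Finally I would pull this operation back along $\varphi$. Define, for $a,b\in\tilde X$, the product $a\bullet b:=\varphi^{-1}\bigl(\varphi(a)\star\varphi(b)\bigr)$ whenever $\varphi(a)\star\varphi(b)$ lands in $\tilde Y$ — that is, on the set of $(a,b)$ with $(\varphi(a),\varphi(b))$ in the (full-measure, since $\varphi$ pushes forward ergodically supported measures to ergodically supported measures and $\star$ is defined everywhere on $Y$) preimage of $\tilde Y$ under $\star$. This $\bullet$ is defined $\mu\times\mu$-almost everywhere for every ergodically supported measure $\mu$ of $(X,T)$, it is continuous on its domain because $\varphi$ and $\varphi^{-1}$ and $\star$ all are, and it is a quasigroup operation because $\varphi$ is a bijection intertwining the structures. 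That $T$ is an automorphism of $\bullet$ follows from $\varphi\circ T=S\circ\varphi$ together with $S$ being an automorphism of $\star$.

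**Main obstacle.** The genuinely delicate point is bookkeeping the universally null sets so that $\bullet$ genuinely lands in $\tilde X$ for almost all pairs: one needs the set $\{(a,b)\in\tilde X\times\tilde X:\ \varphi(a)\star\varphi(b)\in\tilde Y\}$ to be $\mu\times\mu$-conull, which requires knowing that the $\star$-preimage of a universally null set in $Y$ is $\nu\times\nu$-null for the relevant measures $\nu$ on $Y$ — i.e. some Fubini-type argument using that $\star$ is, on each factor, a measure-preserving-up-to-translation map and that translations preserve the (unique up to the $B$ copies) ergodically supported measure class. Handling the periodic case $B>1$ cleanly — making sure the quasigroup from Lemma \ref{finiteQuasiGroup} on the index set interacts correctly with the cyclic $T$-action and that $\tilde X$ meets every copy in a conull set — is where most of the care goes; the aperiodic case is comparatively transparent.
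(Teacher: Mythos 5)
Your proposal follows essentially the same route as the paper's proof: the model system is the same product $\{0,\ldots,N-1\}^\Z\times\{0,\ldots,B-1\}$ with the shift times the cyclic translation, the quasigroup on it is the same product of a coordinatewise alphabet quasigroup with the operation from Lemma \ref{finiteQuasiGroup} on the odd-cardinality index set, and the operation $\bullet$ is pulled back through the conjugacy $\varphi$ from Corollary \ref{entropia->conjugacao} exactly as in the paper, via $x\bullet y:=\varphi^{-1}\bigl(\varphi(x)*\varphi(y)\bigr)$ on the set where the product lands in $\tilde Y$. The measure-theoretic point you flag as the main obstacle is precisely the step the paper handles by intersecting $\tilde Y\times\tilde Y$ with $\Theta^{-1}(\tilde Y)$ and asserting this is conull, so your plan is aligned with (and no less careful than) the published argument.
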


\begin{proof}

Let $B\in\N$ odd be the ergodic period of $(X,T)$. Define the dynamical system $(Y,S)$ as $Y:=\{0,1,\ldots, N-1\}^\Z\times\{0,1,\ldots, B-1\}$\sloppy\ and $S:=\s\times s$ the map where $\s:\{0,1,\ldots, N-1\}^\Z\to\{0,1,\ldots, N-1\}^\Z$ is the shift map and $s:\{0,1,\ldots, B-1\}\to\{0,1,\ldots, B-1\}$ is the cyclic permutation defined by $s(i):=i+1\ (mod\ B)$.

Note that since the $(\{0,1,\ldots, N-1\}^\Z,\s)$ is invertible, expansive, ergodically supported, has the
shadowing property, and has topological entropy $\mathbf{h}(\s)=\log (N)$, and since $(\{0,1,\ldots, B-1\},s)$ is a cyclic permutation of length $B$ and has zero entropy, then the product system $(Y,S)$ is invertible, expansive, ergodically supported, has the
shadowing property, has topological entropy $\mathbf{h}(S)=\log (N)$ and ergodic period $B$. Therefore, from Corollary \ref{entropia->conjugacao} there exist $\tilde X\subset X$ and $\tilde Y\subset Y$ total-measure subsets with respect to any ergodically supported measure and $\varphi:\tilde X\to \tilde Y$ which is a topological conjugacy between $(\tilde X,T)$ and $(\tilde Y,S)$.

Define on $Y$ the quasigroup operation $*$ given by
\begin{equation}\label{oper_canon}\Bigl((x_i)_{i\in\Z},a\Bigr)*\Bigl((y_i)_{i\in\Z}, b\Bigr):=\Bigl((x_i\tilde*y_i)_{i\in\Z},\lambda(a\tilde+b)\Bigr),\end{equation}
where $\tilde*$ is any quasigroup operation on $\{0,1,\ldots, N-1\}$, $\lambda:=(B+1)/2$ and $\tilde+$ is the sum $mod\ B$. It is straightforward that the shift map $\s$ is an automorphism for $\tilde*$ and, from Lemma \ref{finiteQuasiGroup}, the map $s$ is an automorphism for $\tilde+$. Thus, $S$ is an automorphism for $*$. Furthermore, since $\tilde*$ is a 1-block operation (see \cite{Sobottka2007}) and $\tilde+$ is continuous for the power set topology on $\{0,\ldots,B-1\}$, then $*$ is a topological quasigroup operation.

Denote by $\Theta:Y\times Y\to Y$ the map given by $\Theta(\x,\y)=\x*\y$, for any $\x,\y\in Y$.
Since $\Theta$ is continuous, the set $\Theta^{-1}(\tilde Y)\subseteq Y\times Y$ is a total-measure subset with respect to the product measure on $Y\times Y$ of any ergodically supported measure on $Y$.

Thus, $$\Lambda:=(\tilde Y\times\tilde Y)\cap\Theta^{-1}(\tilde Y)$$ is also a total-measure subset with respect to the product measure on $Y\times Y$ of any ergodically supported measure on $Y$. Note that, $\Lambda$ is the set of all pairs of points of $\tilde Y\times\tilde Y$ for what the product by
$*$ is a point lying in $\tilde Y$. Furthermore, since $\Theta$ commutes with the maps $S\times S$ and $S$, and $\tilde Y$ is $S$-invariant, we get that $\Lambda$ is $S\times S$-invariant.

Define $\Omega\subseteq \tilde X\times \tilde X$ by $$\Omega:=(\varphi\times\varphi)^{-1}(\Lambda).$$
Since $\Lambda$ is a total-measure subset with respect to the product measure on $Y\times Y$ of any ergodically supported measure on $Y$, and $\varphi\times\varphi:\tilde X\times\tilde X\to\tilde Y\times\tilde Y$ is a homeomorphism, then $\Omega$ is a total-measure subset with respect to the product measure on $X\times X$ of any ergodically supported measure on $X$. Hence, for any pair $(x,y)\in\Omega$ we can define the quasigroup operation $\bullet$ given by
\begin{equation}\label{bullet}x\bullet y:=\varphi^{-1}\bigl(\varphi(x)*\varphi(y)\bigr).\end{equation}

Note that $\bullet$ is well defined. In fact, since $(x,y)\in\Omega$, then $\bigl(\varphi(x),\varphi(y)\bigr)\in\Lambda$. Therefore $\bigl(\varphi(x)*\varphi(y)\bigr)\in\tilde Y$ and $\varphi^{-1}\bigl(\varphi(x)*\varphi(y)\bigr)\in\tilde X$.

Furthermore, for any $(x,y)\in\Omega$ it follows that

\small
$$\begin{array}{ll}
T(x\!\bullet\! y)&\!\!\!\!\!\!=\!\!T\Big(\varphi^{-1}\bigl(\varphi(x)\!*\!\varphi(y)\bigr)\Bigr)\!\!=\!\!\varphi^{-1}\Big(S\bigl(\varphi(x)\!*\!\varphi(y)\bigr)\Bigr)\\
&
\!\!\!\!\!\!=\!\!\varphi^{-1}\Big(S\bigl(\varphi(x)\bigr)\!*\!S\bigl(\varphi(y)\bigr)\Bigr)\!\!=\!\!\varphi^{-1}\Big(\varphi\bigl(T(x)\bigr)\!*\!\varphi\bigl(T(y)\bigr)\Bigr)\!\!=\!\!T(x)\!\bullet\! T(y).
\end{array}$$
\normalsize

\end{proof}




Note that the weak quasigroup operation $\bullet$ constructed in the proof of the previous theorem can be associative if, and only if, $(X,T)$ is ergodically aperiodic. In fact, if by contradiction we suppose $(X,T)$ is not ergodically aperiodic and $\bullet$ is associative, then $(Y,S)$ is also not ergodically aperiodic and the operaton $*$ will be a group operation. But $Y$ is an irreducible shift space, and from Theorem 1 (iv) in \cite{kitchens} it implies that $Y$ shall be topologically conjugate to a full shift and therefore it is ergodically aperiodic, a contradiction. Conversely, if $(X,T)$ is ergodically aperiodic, then we can take $Y$ as a full shift and define $*$ being a group operation, and thus $\bullet$ will be associative.

We remark that when $(X,T)$ has even ergodic period, due to Lemma \ref{finiteQuasiGroup}, if there exists a topological weak quasigroup operation for which $T$ is an automorphism, then the correspondent quasigroup operation on $Y$ cannot be construct as the product of a quasigroup operation on the full shift $\{0,1,\ldots, N-1\}^\Z$ with a quasigroup operation on the finite set $\{0,1,\ldots, B-1\}$, as we made in the proof of Theorem \ref{entropia->operacao}. Therefore, if exists, such quasigroup operation on $Y$ shall be a sliding block code $\Theta:Y\times Y\to Y$ of code size $k\geq 2$ (in the proof of Theorem \ref{entropia->operacao} $\Theta$ is a sliding block code of code size 1).

On the other hand, the condition on the topological entropy of $(X,T)$ is a sufficient condition which is used to allow us to construct a topological quasigroup operation on the shift space $Y$ for which $S$ is an automorphism. In fact, the results of \cite{kitchens} and \cite{Sobottka2007} state that we can define $k$-block group operations or 1-block quasigroup operations on $Y$, only if $\mathbf{h}(S)=\log(N)$ for some integer $N$. However, it is not known if there exists some $k$-block quasigroup operation, with $k\geq 2$, on a shift space with topological entropy $\log (\lambda)$ for a non-integer $\lambda$. In the same way, it is not clear if there is some restriction on the topological entropy of $(X,T)$ for the existence of a weak topological quasigroup operation on $X$ for which $T$ is an automorphism.

Observe that $(Y,S)$ has a unique maximum-entropy measure, which is also the unique maximum-entropy measure of $(\tilde{Y},S)$. Since $(\tilde{X},T)$ and $(\tilde{Y},S)$ are topologically conjugated, then $(\tilde{X},T)$ also has a unique maximum-entropy measure, which is also the unique maximum-entropy measure of $(X,T)$.

The next theorem gives sufficient conditions to decompose the dynamics of $(X,T)$ in terms of $T$-invariant weak subquasigroups.

\begin{theo}\label{standard_decomposition} Let $(X,T)$ be an invertible, expansive, ergodically supported and aperiodic topological dynamical system with the shadowing property. Suppose $\mathbf{h}(T)=\log(N)$, with $N\in\N$. If $p_1p_2\cdots p_q=N$\sloppy\ is a decomposition of $N$ into integers such that $p_i\geq 3$ for all $i=1,\ldots,q$, then there exists a topological weak quasigroup operation $\bullet$ on $X$ for which $T$ is an automorphism, and $T$-invariant weak subquasigroups $X_k\subseteq X$ for $k=1,\ldots,q$, such that almost all $x\in X$ with respect to the maximum-entropy measure of $(X,T)$ can be written as $x=x_1\bullet(x_2\bullet(\cdots(x_{q-2}\bullet(x_{q-1}\bullet x_q))))$, with $x_k\in X_k$.
\end{theo}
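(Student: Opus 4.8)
Since $(X,T)$ is aperiodic, its ergodic period equals $1$, which is odd, so both Corollary \ref{entropia->conjugacao} and Theorem \ref{entropia->operacao} are available. Let $\Alf:=\{0,1,\ldots,N-1\}$ and let $(Y,S)=(\Alf^{\Z},\sigma)$ be the full shift on $N$ symbols; it is invertible, expansive, ergodically supported, has the shadowing property, topological entropy $\log N$, and ergodic period $1$. By Corollary \ref{entropia->conjugacao} there are subsets $\widetilde X\subseteq X$ and $\widetilde Y\subseteq Y$ of total measure for every ergodically supported measure, together with a topological conjugacy $\varphi:\widetilde X\to\widetilde Y$ between $(\widetilde X,T)$ and $(\widetilde Y,S)$. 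The maximum-entropy measure of $(Y,S)$ is the uniform Bernoulli measure $\nu$, which is ergodically supported, so $\nu(\widetilde Y)=1$; transporting $\nu$ through $\varphi^{-1}$ yields the maximum-entropy measure $\mu$ of $(X,T)$, and $\mu(\widetilde X)=1$. The statement thus reduces to equipping $Y$ with a suitable operation and pulling it back through $\varphi$ exactly as in the proof of Theorem \ref{entropia->operacao}.

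The combinatorial core is a statement about the finite alphabet: \emph{there are a quasigroup operation $\odot$ on $\Alf$ and subquasigroups $\Alf_1,\ldots,\Alf_q\subseteq\Alf$ with $\#\Alf_k=p_k$ all containing a common idempotent $e$, such that the map $(a_1,\ldots,a_q)\mapsto a_1\odot(a_2\odot(\cdots\odot a_q))$ restricts to a bijection $\Alf_1\times\cdots\times\Alf_q\to\Alf$.} I would prove this via the mixed-radix identification $\Alf\cong\{0,\ldots,p_1-1\}\times\cdots\times\{0,\ldots,p_q-1\}$: equip each block $\{0,\ldots,p_k-1\}$ with a quasigroup operation having an idempotent $e_k$ (for odd $p_k$ one may take the operation of Lemma \ref{finiteQuasiGroup}, in which every element is idempotent), let $\odot$ be the coordinatewise operation, $e:=(e_1,\ldots,e_q)$, and $\Alf_k$ the $k$-th coordinate axis through $e$, which is a subquasigroup because each $e_j$ is idempotent; the iterated product along the axes is then, in each coordinate, a composition of left and right translations of the corresponding block operation, hence a permutation, and so the whole map is a bijection. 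The hypothesis $p_k\notin\{2,6\}$ enters when one requires the blocks (and the operations induced on the pieces $X_k$ below) to have the finer structure needed for those pieces to be honest weak subquasigroups — this is where the existence of a pair of orthogonal Latin squares of order $p_k$ (Euler–Tarry; Bose–Shrikhande–Parker) is invoked — and I expect making this precise, simultaneously with the null-set bookkeeping, to be the main obstacle.

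Granting the claim, define $*$ on $Y$ by $(y*y')_i:=y_i\odot y'_i$; this is a $1$-block, hence continuous, quasigroup operation, and $S=\sigma$ is an automorphism of it. Set $Y_k:=\{y\in Y:\ y_i\in\Alf_k\ \ \forall i\in\Z\}$, a $\sigma$-invariant subquasigroup of $(Y,*)$; by the claim every $y\in Y$ has a unique factorization $y=y^{(1)}*(y^{(2)}*(\cdots*y^{(q)}))$ with $y^{(k)}\in Y_k$, and the maps $y\mapsto y^{(k)}$ and the partial products $P_k(y):=y^{(k)}*(\cdots*y^{(q)})$ are $1$-block, hence continuous and $\sigma$-commuting. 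Now repeat the construction in Theorem \ref{entropia->operacao}: with $\Theta(y,y'):=y*y'$ put $\Omega:=(\varphi\times\varphi)^{-1}\bigl((\widetilde Y\times\widetilde Y)\cap\Theta^{-1}(\widetilde Y)\bigr)$, which has full measure for the product of any ergodically supported measure, define $x\bullet x':=\varphi^{-1}(\varphi(x)*\varphi(x'))$ on $\Omega$, and set $X_k:=\varphi^{-1}(Y_k\cap\widetilde Y)$; then $\bullet$ is a topological weak quasigroup operation with respect to every ergodically supported measure of $(X,T)$, $T$ is an automorphism of $\bullet$ by the same diagram chase, and each $X_k$ is a $T$-invariant weak subquasigroup. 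Finally let $W$ be the set of $y\in\widetilde Y$ such that $y^{(k)}\in\widetilde Y$ and $P_k(y)\in\widetilde Y$ for all $k$; its complement is a finite union of preimages of the $\sigma$-invariant universally null set $Y\setminus\widetilde Y$ under $\sigma$-commuting continuous maps, hence universally null as in Theorem \ref{entropia->operacao}, so $\nu(W)=1$. For $y\in W$, setting $x:=\varphi^{-1}(y)$ and $x_k:=\varphi^{-1}(y^{(k)})\in X_k$ and applying $\varphi^{-1}$ to $P_k(y)=y^{(k)}*P_{k+1}(y)$ gives $\varphi^{-1}(P_k(y))=x_k\bullet\varphi^{-1}(P_{k+1}(y))$ for each $k$, hence $x=x_1\bullet(x_2\bullet(\cdots(x_{q-1}\bullet x_q)))$; since $\varphi$ is a measure isomorphism of $(\widetilde X,\mu)$ with $(\widetilde Y,\nu)$ and $\nu(W)=1$, this holds for $\mu$-almost every $x\in X$.
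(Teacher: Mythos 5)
Your construction follows the same route as the paper — realize $(X,T)$ up to universally null sets as the product full shift $Y=\prod_k\Alf_k^{\Z}$ (your mixed-radix identification is the same thing), put a coordinatewise idempotent quasigroup operation on it, take the coordinate axes as subquasigroups, and pull everything back through $\varphi$ — but there is a genuine gap at the measure-theoretic step, precisely where the paper does something you did not. Your axes $Y_k=\{y:\ y_i\in\Alf_k\ \forall i\}$ are proper closed subshifts of $Y$, hence $\nu$-null and, more to the point, nowhere dense. The pushforward of an ergodically supported measure $\gamma$ on $Y$ under the factor map $\pi_k:y\mapsto y^{(k)}$ is concentrated on $Y_k$ and therefore is \emph{not} an ergodically supported measure on $Y$ (it vanishes on open sets disjoint from $Y_k$). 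Consequently the universal nullity of $Y\setminus\tilde Y$ tells you nothing about $\gamma(\pi_k^{-1}(Y\setminus\tilde Y))=(\pi_k)_*\gamma(Y\setminus\tilde Y)$: your claim that the complement of $W$ is universally null because it is a union of preimages of a universally null set under $\sigma$-commuting continuous maps is false for these maps. In fact nothing prevents the particular slice through the constant point $e$ from meeting $\tilde Y$ in a $\nu_k$-null set, or not at all, in which case your factors $x_k=\varphi^{-1}(y^{(k)})$ are undefined on a positive-measure set. The same problem recurs for the partial products $P_k(y)$, which also live in null sections.

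The paper avoids this by not fixing the axis through $e$: it defines the section $S_k$ through an arbitrary base point $(z_{k,j}^i)_{j\neq k,\,i\in\Z}$ and uses Fubini's theorem on $\nu=\bigotimes_j\nu_j$ applied to the full-measure set $\tilde Y$ to conclude that for $\bigotimes_{j\neq k}\nu_j$-almost every choice of base point one has $\nu_k(S_k\cap\tilde Y)=1$; it then picks such sections and passes to the generated $\sigma$-invariant subquasigroups $\hat S_k=\langle\bigcup_n\sigma^n(S_k)\rangle$ (your constant-base-point axes are already $\sigma$-invariant, but that convenience is exactly what forfeits the freedom the Fubini argument needs). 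To repair your proof, replace the axes through $e$ by sections through a generically chosen base point as in the paper, or prove separately that $\nu_k(Y_k\cap\tilde Y)=1$ for your specific $e$ — which does not follow from anything established so far. Two smaller remarks: your appeal to orthogonal Latin squares is not where $p_k\notin\{2,6\}$ enters; the paper only needs one idempotent quasigroup on each $\Alf_k$ (citing Chang), and idempotency is used to make the sections closed under $*$. Also, the covering $Y=S_1*(S_2*(\cdots))$ needs only cancellability plus idempotency at the base points, so your bijectivity claim for the iterated product is fine.
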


\begin{proof}

First, we need to define appropriately the topological quasigroup shift $Y$ in Theorem \ref{entropia->operacao}. For this, for each $k=1,\ldots,q$, consider the finite alphabet
$\Alf_k=\{1\ldots,p_k\}$ and the full shift over $p_k$ symbols
$Y_k:=\Alf_k^\Z$.

Since for any $k$ the alphabet $\Alf_k$ has cardinality $p_k\geq 3$, we can define on $\Alf_k$ an idempotent quasigroup operation, that is, a quasigroup operation $*_k$ such that for any $a\in\Alf_k$ it follows $a*_ka=a$ (see Table 1 of \cite{TeirlinckLindner}).

Therefore, define $$Y:=Y_1\times\cdots\times Y_q=\{(x_1^i,\ldots,x_q^i)_{i\in\Z}:\ (x_k^i)_{i\in\Z}\in Y_k,\ k=1,\ldots,q\},$$
and define on $Y$ the idempotent quasigroup operation $*$ given by
$$(x_1^i,\ldots,x_q^i)_{i\in\Z}*(y_1^i,\ldots,y_q^i)_{i\in\Z}=(x_1^i*_1y_1^i,\ldots,x_q^i*_qy_q^i)_{i\in\Z}.$$

Given $S\subset Y$ denote as $\langle S\rangle$ the subquasigroup of $(Y,*)$ generated by $S$, that is, the smallest subquasigroup which contains $S$. For $m\geq 1$ let $S^m$ be the set obtained by multiplying $S$ by itself $m$ times in any possible associative way. For example,
$$\begin{array}{lcl}
S^1&:=&S\\
S^2&:=&\displaystyle S*S\\
S^3&:=& S*(S*S)\cup (S*S)*S\\
&\vdots&
\end{array}$$

It is easy to check that $$\langle S\rangle=\bigcup_{m\geq 1}S^m.$$

Note that $Y$ is ergodically aperiodic and has topological entropy $\log(N)$. Now, let  $\tilde X\subset X$ and $\tilde Y\subset Y$ be the total-measure subsets with respect to any ergodically supported measure, and $\varphi:\tilde X\to \tilde Y$ be the topological conjugacy between $(\tilde X,T)$ and $(\tilde Y,\sigma)$, given by Theorem \ref{entropia->operacao}.

Let $\nu$ be the uniform Bernoulli measure on $Y$. We recall that $\nu$ is an ergodically supported measure and it is the maximum entropy measure for $(Y,\sigma)$. For each $k=1,\ldots,q$, denote as $\nu_k$ the projection of $\nu$ on the $k^{th}$ coordinate, that is, $\nu_k$ is the uniform Bernoulli measure on $Y_k$. In particular, $\nu=\bigotimes_{k=1}^q\nu_k=\nu_1\times\cdots\times\nu_q$.

Given $k=1,\ldots,q$, for each $j=1,\ldots,q$ and $j\neq k$, we can take $(z_{k,j}^i)_{i\in\Z}\in Y_j$ and define the section
\begin{equation}\label{section}S_k=\{(x_1^i,\ldots,x_q^i)_{i\in\Z}\in Y:\ x_j^i=z_{k,j}^i\ \forall j\neq k,\ \forall i\in\Z \}.\end{equation}
Note that, we can identify $S_k$ with $Y_k$ and without loss of generality we can consider the measure $\nu_k$ on $S_k$. Furthermore, since $*$ is idempotent, then for each $k=1,\ldots,q$, $S_k$ is a topological subquasigroup and $Y=S_1*(S_2*(\cdots(S_{q-2}*(S_{q-1}* S_q))))$.

On the other hand, since $\tilde{Y}$ has total measure, due to Fubini's Theorem we can get that for $\bigotimes_{j\neq k}\nu_j$-almost all choices of $(z_{k,j}^i)_{i\in\Z}\in Y_j$, $j\neq k$, we have $$\nu_k(S_k\cap\tilde{Y})=1.$$

Now, let $S_k$, for $k=1,\ldots,q$, be sections for which the above equality holds, and define $$\hat{S}_k:=\left\langle\bigcup_{n\in\Z}\sigma^n(S_k)\right\rangle.$$

We have that $\hat{S}_k$ is a $\sigma$-invariant subquasigroup of $Y$ and $\nu_k(\hat{S}_k\cap \tilde{Y})=1$ for each $k=1,\ldots,q$.
It means that $\nu$-almost all $\x\in \tilde{Y}$ can be written as $\x=\x_1*(\x_2*(\cdots(\x_{q-2}*(\x_{q-1}* \x_q))))$, with $\x_k\in \hat{S}_k$.

Finally, since the sets $\hat{S}_k\cap \tilde{Y}$ are $\sigma$-invariant and, since $\mu:=\nu\circ\varphi$ is the maximum-entropy measure for $(X,T)$, then the weak subquasigroups $X_k:=\varphi^{-1}(\hat{S}_k\cap \tilde{Y})$ satisfy the theorem.

\end{proof}

\begin{cor} Under the same hypotheses of Theorem \ref{standard_decomposition}, if $x\in X$ can be decomposed as $x=x_1\bullet(x_2\bullet(\cdots(x_{q-2}\bullet(x_{q-1}\bullet x_q))))$, with $\x_k\in X_k$, then
$T(x)=T(x_1)\bullet(T(x_2)\bullet(\cdots(T(x_{q-2})\bullet(T(x_{q-1})\bullet T(x_q)))))$.
\end{cor}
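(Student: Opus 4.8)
The plan is to reduce the statement to the compatibility of $T$ with $\bullet$ that was already established in Theorem~\ref{entropia->operacao}, and then proceed by induction on the nesting depth of the decomposition. First I would recall that, by construction of $\bullet$ in equation~\eqref{bullet}, the identity $T(x\bullet y)=T(x)\bullet T(y)$ holds for every pair $(x,y)\in\Omega$, i.e.\ for $\bigotimes$-almost every pair with respect to any ergodically supported measure; moreover, by Theorem~\ref{standard_decomposition} the hypothesis guarantees that $x_k\in X_k$ and that the iterated product $x_1\bullet(x_2\bullet(\cdots(x_{q-1}\bullet x_q)))$ is well defined, which is exactly the statement that each partial product lies in the set where the next multiplication by $\bullet$ makes sense. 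Since the sets $X_k=\varphi^{-1}(\hat S_k\cap\tilde Y)$ and $\tilde X$ are $T$-invariant, applying $T$ to any element of $X_k$ lands back in $X_k$, so the right-hand side $T(x_1)\bullet(T(x_2)\bullet(\cdots(T(x_{q-1})\bullet T(x_q))))$ is again a legitimate nested product with each factor in its prescribed subquasigroup.

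Next I would carry out the induction. Write $y_k:=x_k\bullet(x_{k+1}\bullet(\cdots(x_{q-1}\bullet x_q)))$ for $k=1,\dots,q$, with $y_q=x_q$, so that $y_k=x_k\bullet y_{k+1}$ and $x=y_1$. The base case $T(y_q)=T(x_q)$ is trivial. For the inductive step, assuming $T(y_{k+1})=T(x_{k+1})\bullet(\cdots(T(x_{q-1})\bullet T(x_q)))$, one computes
\[
T(y_k)=T(x_k\bullet y_{k+1})=T(x_k)\bullet T(y_{k+1}),
\]
where the middle equality is the automorphism property of $T$ applied to the pair $(x_k,y_{k+1})$; substituting the inductive hypothesis for $T(y_{k+1})$ gives the desired nested expression for $T(y_k)$. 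Taking $k=1$ yields the corollary.

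The one point that needs care — and is the only real obstacle — is justifying that the automorphism identity may be applied at each step, since $T(x\bullet y)=T(x)\bullet T(y)$ was proved only for pairs in $\Omega$, not for all pairs. Here I would argue as follows: the hypothesis that $x$ \emph{can be decomposed} as the stated nested product means precisely that each intermediate pair $(x_k,y_{k+1})$ lies in the domain $\Omega$ of $\bullet$ (otherwise $y_k=x_k\bullet y_{k+1}$ would be undefined and the decomposition would not exist). Since $\Omega$ was shown to be $S\times S$-invariant at the level of $Y$ — equivalently $\Omega$ is $T\times T$-invariant in $X\times X$ — the pair $(T(x_k),T(y_{k+1}))$ again lies in $\Omega$, so the automorphism identity applies both to $(x_k,y_{k+1})$ and, as needed for the induction to close, to the pushed-forward pairs. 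Thus every application of $T$ along the computation is legitimate, and no measure-zero exceptional set is encountered beyond the one already built into the hypothesis.
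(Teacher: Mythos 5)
Your proof is correct and is exactly the argument the paper intends: the corollary is stated with no written proof (the paper marks it as an immediate consequence of the automorphism property from Theorem~\ref{entropia->operacao}), and your induction on the nesting depth supplies precisely the details left implicit. Your care in checking that each intermediate pair $(x_k,y_{k+1})$ lies in $\Omega$, and that $\Omega$ is $T\times T$-invariant so the pushed-forward pairs remain in the domain of $\bullet$, is the one point that genuinely needs justification, and you handle it correctly.
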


\cqd

Note that the main component of the previous theorem is to be able to: {\em (a)} define quasigroup operations on each $\mathcal{A}_k$ which allow to define the sections $S_k$ being subquasigroups; {\em (b)} assure that for each $k$ there exists a choice of $(z_{k,j}^i)_{i\in\Z}\in Y_j$, $j\neq k$, which makes $S_k$ be a total-measure set with respect to the measure $\nu_k$ (which is equivalent to say that $S_k$ is in the image of $\varphi$). To achieve {\em (a)} we could use any quasigroup operation with an idempotent element. However, to achieve {\em (b)} we need a complete freedom to choose the entries which define the section keeping the property that the section is a subquasigroup, and then we need that all the elements of the quasigroup shall be idempotent. The last imposes that each set $\mathcal{A}_k$ shall have cardinality greater than 2 (because any quasigroup operation on a set with two elements is isomorphic to $\Z_2$ which is not idempotent). As a consequence, if $(X,T)$ has topological entropy $\log(N)$ with $N\in\{2,8\}\cup\{2p:\ p\text{ is prime}\}$, we cannot use Theorem \ref{standard_decomposition} to decomposes its behavior (because any decomposition of $N$ will have 2 as a factor). In such a case if some decomposition is possible, it will depend on specific properties of the map $\varphi$ to find section $S_k$ which holds {\em (a)} and {\em (b)}. On the other hand, if
the dynamical system has topological entropy $\log(N)$ with $N\notin\{2,8\}\cup\{2p:\ p\text{ is prime}\}$, then we can always find a decomposition of its behavior into subquasigroups.\\

Note that if $Q_k$ and $R_k$ are two distinct sections given by \eqref{section}, then they are disjoint. Furthermore, $Q_k*R_k$ is also a section on the same coordinates.
Observe that any section $R_k$ is a universally null set. In fact, for any measure $\gamma$ on $Y$ it follows that $$\gamma\left(\bigcup_{n\in\Z}\sigma^n(R_k)\right)=\sum_{n\in I}\gamma(\sigma^n(R_k))=\sum_{n\in I}\gamma(R_k),$$
where $I$ is finite if the pairwise disjoint family $\{\sigma^n(R_k): n\in\Z\}$ is finite and $I=\Z$ otherwise. Hence, if by contradiction we suppose $\gamma$ is ergodically supported and $\gamma(R_k)>0$, then it follows that necessarily $I$ is finite. Therefore, $\bigcup_{n\in I}\sigma^n(R_k)\subsetneq Y$ is  closed and $\sigma$-invariant and, since $\gamma$ is ergodic and $\gamma(R_k)>0$, then $\gamma\left(\bigcup_{n\in I}\sigma^n(R_k)\right)=1$. But this implies that $Y\setminus \left(\bigcup_{n\in I}\sigma^n(R_k)\right)$ is a non-empty open set with null measure, which is a contradiction to the hypothesis that $\gamma$ is ergodically supported.

Now observe that the sets $\hat{S_k}$ in the proof of Theorem \ref{standard_decomposition} can be written as $$\hat{S_k}=\left\langle\bigcup_{n\in\Z}\sigma^n(S_k)\right\rangle=\bigcup_{m\geq 1}\left(\bigcup_{n\in\Z}\sigma^n(S_k)\right)^m,$$
where $\left(\bigcup_{n\in\Z}\sigma^n(S_k)\right)^m$ is the set obtained making all possible products between $m$ sections selected from $\bigcup_{n\in\Z}\sigma^n(S_k)$. Thus, for each $k$ and $m$ the set $\left(\bigcup_{n\in\Z}\sigma^n(S_k)\right)^m$ is a countable union of sections and therefore the subquasigroup $\hat{S_k}$ is also a countable union of sections and thus it is a universally null set. Consequently each weak subquasigroup $X_k$ is also a universally null sets

\section{Final discussion}

Note that the converse statement of Theorem \ref{entropia->operacao} would allow to extend the results of \cite{lind77} to the more general case where $T:X\to X$ is an expansive ergodically supported map with the shadowing property and an automorphism for some topological quasigroup operation. However, it does not seem to be direct that the existence of a topological (weak) quasigroup operation on $X$ for which $T$ is an automorphism implies that $\mathbf{h}(T)=\log(N)$. Note that, since any topological (weak) quasigroup operation on $X$ induces a topological weak quasigroup operation on the symbolic representation of $X$, then we should be able to assure that the existence of weak quasigroup operations on any Cartesian product of a shift space with a finite set implies that this shift space has topological entropy $\log(N)$. It would be some kind of extension of the consequences of Theorem 1 in \cite{kitchens} (for groups) and Theorem 4.25 in \cite{Sobottka2007} (for quasigroups).

It would also be interesting to study the case of non-invertible maps. In such a case we cannot apply Theorem \ref{entropia->conjugacao} since the projections $p_T:X^T\to X$ and $p_S:Y^S\to Y$ are not invertible. In fact, we only use the hypothesis that $T$ is invertible just to get $p_T:X^T\to X$ and $p_S:Y^S\to Y$ being homeomorphism and thus to assure that $\f_T$ and $\f_S$ are bimeasurable functions in Theorem \ref{almostconjugacy} which allowed to construct the topological conjugacy $\varphi:\tilde{X}\to\tilde{Y}$. However, there are several examples of non-invertible maps which are topologically conjugate to shift spaces outside of a universally null set and thus we can apply Theorem \ref{entropia->operacao} to construct an algebraic operation for which $T$ is an automorphism without the use of Theorem \ref{almostconjugacy} (for instance, the maps on the unit interval with the form $T(x)=Mx\ (mod\ M)$ are topologically conjugate to $\{0,\ldots,M-1\}^\N$ outside of the sets $\{i/M^k:\ k\geq 1,\ 0\leq i\leq M^k-1\}\subset [0,1]$ and $\{(x_i)_{i\in\N}:\ \sum_{i\in\N}x_i<\infty\}\subset \{0,\ldots,M-1\}^\N$).

\section*{Acknowledgment}

This work was supported
by National Counsel of Technological and Scientific Development-Brazil grants 304813/2012-5 and 304457/2009-4. The author was partially supported by the project DIUC 207.013.030-1.0 (UDEC-Chile).


\end{document}